\documentclass[12pt,a4paper,reqno,twoside]{amsart}

\usepackage[english]{babel}
\usepackage{stmaryrd}
\usepackage{dsfont}
\usepackage[symbol*,ragged]{footmisc}
\usepackage[colorlinks,linkcolor=red,anchorcolor=blue,citecolor=blue,urlcolor=blue]{hyperref}
\usepackage{color,xcolor}

\usepackage{geometry}
\usepackage{amssymb}
\usepackage{amsmath}
\usepackage{mathrsfs}
\usepackage{amsfonts}
\usepackage{epsfig}

\usepackage{amsthm}
\usepackage{amsxtra}
\usepackage{bbding}
\usepackage{epsfig}
\usepackage{graphicx}
\usepackage{latexsym}
\usepackage{mathbbol}
\usepackage{bbold}

\usepackage{pifont}
\usepackage{wasysym}
\usepackage{skull}
\usepackage{float}

\DeclareSymbolFontAlphabet{\mathbb}{AMSb}
\DeclareSymbolFontAlphabet{\mathbbol}{bbold}

\usepackage{amscd}
\usepackage[all]{xy}
\allowdisplaybreaks[4]
\usepackage{setspace}

\theoremstyle{plain}
\newtheorem{theorem}{\normalfont\scshape Theorem}[section]
\newtheorem{proposition}{\normalfont\scshape Proposition}[section]
\newtheorem{lemma}[proposition]{\normalfont\scshape Lemma}

\newtheorem*{corollary*}{\normalfont\scshape Corollary}

\theoremstyle{remark}
\newtheorem*{remark*}{\normalfont\scshape Remark}
\newtheorem*{notation}{\normalfont\scshape Notation}

\numberwithin{equation}{section}
\addtocounter{footnote}{1}

\renewcommand{\footnoterule}{
  \kern -3pt
  \hrule width 2.5in height 0.4pt
  \kern 3pt
}

\makeatletter
\@ifundefined{MakeUppercase}{}{}
\makeatother

\begin{document}
	
\title[On the quadratic Waring--Goldbach problem with primes in Piatetski--Shapiro sets]
	  {On the quadratic Waring--Goldbach problem with primes in Piatetski--Shapiro sets}

\author[Meng Gao, Jinjiang Li, Linji Long, Min Zhang]
       {Meng Gao \quad \& \quad Jinjiang Li \quad \& \quad Linji Long \quad \& \quad Min Zhang}

\address{[Meng Gao] Department of Mathematics, China University of Mining and Technology,
         Beijing 100083, People's Republic of China}

\email{\textcolor{black}{meng.gao.math@gmail.com}}

\address{[Jinjiang Li] (Corresponding author) Department of Mathematics, China University of Mining and Technology,
         Beijing 100083, People's Republic of China}

\email{\textcolor{black}{jinjiang.li.math@gmail.com}}

\address{[Linji Long] Department of Mathematics, China University of Mining and Technology,
         Beijing 100083, People's Republic of China}

\email{\textcolor{black}{linji.long.math@gmail.com}}

\address{[Min Zhang] School of Applied Science, Beijing Information Science and Technology University,
		 Beijing 100192, People's Republic of China  }

\email{\textcolor{black}{min.zhang.math@gmail.com}}

\date{}

\footnotetext[1]{Jinjiang Li is the corresponding author. \\
  \quad\,\,
{\textbf{Keywords}}: Piatetski--Shapiro sequence; Hua's theorem; transference principle\\

\quad\,\,
{\textbf{MR(2020) Subject Classification}}: 11P05, 11P32, 11D85

}

\begin{abstract}
In this paper, it is proved that, for any $\gamma_1,\gamma_2,\gamma_3,\gamma_4,\gamma_5\in(\frac{28}{29},1)$, every sufficiently large integer $n$ subject to $n\equiv5\pmod{24}$ can be represented as the sum of five squares of primes, i.e.,
\begin{equation*}
n=p_1^2+p_2^2+p_3^2+p_4^2+p_5^2,
\end{equation*}
such that $p_i=\lfloor m_i^{1/\gamma_i}\rfloor$	for some $m_i\in\mathbb{N}^+$ for each $1\leqslant i\leqslant 5$.
This result constitutes an improvement upon the previous result of Zhang and Zhai \cite{Zhang-Zhai-2005}.
\end{abstract}

\maketitle

\section{Introduction and main result}
For fixed integer $k\geqslant1$ and sufficiently large integer $n$, the well--known Waring--Goldbach problem is devoted to investigating the solvability of the following Diophantine equality
\begin{equation}\label{Waring-Goldbach-gene}
n=p_1^k+p_2^k+\dots+p_s^k
\end{equation}
in prime variables $p_1,p_2,\dots,p_s$. A formal application of the Hardy--Littlewood method suggests that whenever $s$ and $k$ are natural numbers with $s\geqslant k+1$, then (\ref{Waring-Goldbach-gene}) holds with the expected asymptotic formula
\begin{equation*}
\sum_{n=p_1^k+\dots+p_s^k}1=\mathfrak{S}_{k,s}(n)\frac{\Gamma^s(1+1/k)}{\Gamma(s/k)}\frac{n^{s/k-1}}{(\log n)^s}
+O\bigg(\frac{n^{s/k-1}\log\log n}{(\log n)^{s+1}}\bigg),
\end{equation*}
where $\mathfrak{S}_{k,s}(n)$ is the singular series. Denote by $H(k)$ the least integer $s$ such that every sufficiently large positive integer satisfying some congruent conditions may be written as in the shape of (\ref{Waring-Goldbach-gene}) with $p_1,\dots,p_s$  prime numbers. The first general bound for $H(k)$ was obtained by Hua \cite{Hua-1938}, who showed that
\begin{equation}\label{H(k)-upper}
H(k)\leqslant2^k+1
\end{equation}
for every $k\geqslant1$. This result, which generalizes Vinogradov's celebrated three primes theorem \cite{Vinogradov-1937}, remains the best known bound on $H(k)$ for $k=1,2,3$. When $k\geqslant4$, on the other hand, the bound (\ref{H(k)-upper}) has been sharpened considerably.

Let $\gamma\in(\frac{1}{2},1)$ be a fixed real number. The Piatetski--Shapiro sequences are sequences of the form
\begin{equation*}
 \mathscr{N}_{\gamma}:=\big\{\lfloor n^{1/\gamma}\rfloor:\,n\in \mathbb{N}^+\big\}.
\end{equation*}
Such sequences have been named in honor of Piatetski--Shapiro, who \cite{Piatetski-Shapiro-1953}, in
1953, proved that $\mathscr{N}_{\gamma}$ contains infinitely many primes provided that $\gamma\in(\frac{11}{12},1)$. The prime numbers of the form $p=\lfloor n^{1/\gamma}\rfloor$ are called \textit{Piatetski--Shapiro primes of type $\gamma$}. More precisely, for such $\gamma$ Piatetski--Shapiro \cite{Piatetski-Shapiro-1953} showed that the counting function
\begin{equation*}
 \pi_\gamma(x):=\#\big\{\textrm{prime}\,\, p\leqslant x:\,p=\lfloor n^{1/\gamma}\rfloor\,\,\textrm{for some}\,\,
 n\in\mathbb{N}^+ \big\}
\end{equation*}
satisfies the asymptotic property
\begin{equation*}
\pi_{\gamma}(x)=\frac{x^{\gamma}}{\log x}(1+o(1))
\end{equation*}
as $x\to\infty$. Since then, the range for $\gamma$ of the above asymptotic formula in which it is known that $\mathscr{N}_{\gamma}$ contains infinitely many primes has been enlarged many times (e.g., see the literatures \cite{Kolesnik-1967,Leitmann-1975,Leitmann-1980,Heath-Brown-1983,Kolesnik-1985,Liu-Rivat-1992,
Rivat-1992,Rivat-Sargos-2001}) over the years and is currently known to hold for all $\gamma\in(\frac{2426}{2817},1)$ thanks to Rivat and Sargos \cite{Rivat-Sargos-2001}. Rivat and Wu \cite{Rivat-Wu-2001} also showed that there exist infinitely many Piatetski--Shapiro primes for $\gamma\in(\frac{205}{243},1)$ by showing a lower bound of $\pi_\gamma(x)$ with the expected order of magnitude. We remark that if $\gamma>1$ then $\mathscr{N}_\gamma$ contains all natural numbers, and hence all primes, particularly.

Based on the previous result, it is natural to investigate the Waring--Goldbach problem with prime variables restricted to Piatetski--Shapiro set. For the linear case, in 1992, Balog and Friedlander \cite{Balog-Friedlander-1992} firstly found an asymptotic formula for the number of solutions of the equation (\ref{Waring-Goldbach-gene}) with three variables restricted to the Piatetski--Shapiro primes. An interesting corollary of their theorem is that every sufficiently large odd integer can be written as the sum of three Piatetski--Shapiro primes of type $\gamma$, provided that $\gamma\in(\frac{20}{21},1)$. Afterwards, their studies in this direction were subsequently continued by Jia \cite{Jia-1995} and by Kumchev \cite{Kumchev-1997}, and generalized by Cui \cite{Cui-2004} and Li and Zhang \cite{Li-Zhang-2018}, consecutively and respectively.
In 2025, Sun, Du and Pan \cite{Sun-Du-Pan-2025} used the transference principle to improve the hybrid problem of Vinogradov's three prime theorem in which each prime variable is constrained into Piatetski--Shapiro primes. To be specific, they proved that, for any $\gamma_1,\gamma_2,\gamma_3\in(\frac{35}{41},1)$, every sufficiently large odd integer $n$ can be represented as the sum of three primes, i.e., $n=p_1+p_2+p_3$, where each $p_i$ is of the form
$p_i=\lfloor n_i^{1/\gamma_{i}}\rfloor$ for some integers $n_i\in\mathbb{N}^+$ ($i=1,2,3$).

In 1998, Zhai \cite{Zhai-1998} considered the hybrid problem of quadratic Waring--Goldbach problem with each prime variable restricted to Piatetski--Shapiro sets. To be specific, he proved that, for $\gamma\in(\frac{43}{44},1)$ fixed, every sufficiently large integer $n$ satisfying $n\equiv5\pmod{24}$ can be written as the sum of five squares of primes with each prime of the form $\lfloor m^{1/\gamma}\rfloor$. Later, in 2005, Zhang and Zhai \cite{Zhang-Zhai-2005} improved the result of Zhai \cite{Zhai-1998} and enlarge the range to $\frac{249}{256} <\gamma<1$. Motivated by the work of Sun, Du and Pan \cite{Sun-Du-Pan-2025}, we shall utilize the transference principle to investigate the quadratic case of (\ref{Waring-Goldbach-gene}) with Piatetski--Shapiro primes.

In this paper, we shall improve the result of Zhang and Zhai \cite{Zhang-Zhai-2005}, and establish the following theorem.
\begin{theorem}\label{Theorem-1}
For any $\gamma_1,\gamma_2,\gamma_3,\gamma_4,\gamma_5\in(\frac{28}{29},1)$, every sufficiently large positive integer $n$ subject to $n\equiv5\pmod{24}$ can be represented as the sum of five squares of primes, i.e.,
\begin{equation*}
n=p_1^2+p_2^2+p_3^2+p_4^2+p_5^2,
\end{equation*}
each of which is of the form $p_i=\lfloor m_i^{1/\gamma_i}\rfloor,\,i=1,2,3,4,5$.
\end{theorem}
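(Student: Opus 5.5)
We follow the transference-principle strategy of Sun, Du and Pan, adapted to squares of primes. For each $i$, let $\mathscr{P}_i$ denote the set of Piatetski--Shapiro primes of type $\gamma_i$. A prime $p$ lies in $\mathscr{N}_{\gamma_i}$ exactly when $\lfloor -p^{\gamma_i}\rfloor-\lfloor-(p+1)^{\gamma_i}\rfloor=1$. Writing $\psi(t)=t-\lfloor t\rfloor-\tfrac12$, the indicator of $p\in\mathscr{P}_i$ therefore equals
\begin{equation*}
(p+1)^{\gamma_i}-p^{\gamma_i}+\psi(-(p+1)^{\gamma_i})-\psi(-p^{\gamma_i}).
\end{equation*}
The main term is $\approx\gamma_ip^{\gamma_i-1}$. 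The set $\{p^2: p\in\mathscr{P}_i\}$ is then treated as a weighted subset of the squares of primes, with weight $\gamma_i^{-1}p^{1-\gamma_i}\mathbf{1}_{\mathscr{P}_i}(p)$ times $\log p$. After the standard $W$-trick, i.e.\ restricting to $p$ in residue classes $b\pmod W$ with $W=\prod_{q\le w}q$ and passing to $(p^2-b^2)/W$-type variables, we obtain five functions $f_i$ on $[1,N]$ with $N\asymp n/W$. The model functions $\nu_i$ are the corresponding $W$-tricked weights for all primes (or for a suitable smooth/sieve majorant).

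The transference principle then reduces Theorem \ref{Theorem-1} to three inputs.
\begin{enumerate}
\item \textbf{Mean condition.} Each $f_i$ has the right density on Bohr sets or long progressions. This follows from the asymptotic for Piatetski--Shapiro primes in arithmetic progressions, valid for $\gamma$ close to $1$.
\item \textbf{Restriction estimate.} We need $\|\widehat{f_i}\|_{q}\ll N^{1-1/q}$ for some $q<5$. Since $0\le f_i\ll N^{o(1)}\nu_i$ pointwise, we majorize and use the known restriction estimate for squares of primes. Such an estimate follows from Hua's inequality $\int|S(\alpha)|^4\ll N^{1+\epsilon}$ for squares of $W$-tricked primes, combined with a Bourgain-type epsilon-removal, giving an exponent $q=4+\delta$.
\item \textbf{Pseudorandomness / Fourier closeness.} We need
\begin{equation*}
\sup_\alpha\Big|\sum_{p\sim X}\big(\gamma_i^{-1}p^{1-\gamma_i}\mathbf{1}_{\mathscr{P}_i}(p)-1\big)(\log p)\,e(\alpha p^2)\Big|=o(X).
\end{equation*}
\end{enumerate}
Once these hold, the sumset result for dense subsets of the squares yields a representation for every $n\equiv5\pmod{24}$, the local conditions being handled by the $W$-trick and $n\equiv 5\pmod{24}$.

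The main obstacle is (iii). After expanding $\psi$ via Vaaler's trigonometric approximation with $H=X^{1-\gamma+\epsilon}$ terms, we need to bound
\begin{equation*}
X^{1-\gamma}\sum_{1\le h\le H}\frac1h\Big|\sum_{X<m\le 2X}\Lambda(m)\,e\big(hm^{\gamma}+\alpha m^2\big)\Big|
\end{equation*}
uniformly in $\alpha$ (together with the $(m+1)^\gamma$ variant). We plan to decompose $\Lambda$ by Heath-Brown's identity into Type I and Type II sums. For Type II sums, Cauchy--Schwarz and Weyl differencing in the $\alpha m^2$ phase eliminate $\alpha$. This costs a $q$-differencing parameter $Q$: differencing once leaves phases $2\alpha r m+h(\,(m+r)^\gamma-m^\gamma)$, where the linear term is removed by summing over the smooth variable with van der Corput/exponent-pair estimates. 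Type I sums are treated directly by the exponent pair $(\tfrac12,\tfrac12)$ after a further Weyl step, since the quadratic phase must be killed before any derivative test applies. Balancing $H$, $Q$ and the Type I/II ranges should give a saving of $X^{-\eta}$ precisely when $\gamma>28/29$; this numerology is the delicate part. The mean condition (i) follows from the same exponential sum bounds at $\alpha=a/q$ with small $q$, combined with the Siegel--Walfisz theorem.
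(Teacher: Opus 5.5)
\textbf{There is a genuine gap in step (ii), the restriction estimate, and that is exactly where the threshold $\frac{28}{29}$ comes from.}

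Your claim $0\leqslant f_i\ll N^{o(1)}\nu_i$ is false. With your own weights, $f_i/\nu_i=\gamma_i^{-1}p^{1-\gamma_i}$, which is of size $N^{(1-\gamma_i)/2}$ on most of the support. That is a power of $N$, and the reweighting cannot be avoided: it compensates for the density $p^{\gamma_i-1}$ of $\mathscr{N}_{\gamma_i}$ and is what makes the mean condition hold. So majorising and applying Hua only gives $\int_{\mathbb{T}}|\widehat{f_i}|^4\ll N^{3+2(1-\gamma_i)+o(1)}$.

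Moreover, no argument can give an exponent $q=4+\delta$ with $\delta$ independent of $\gamma_i$:
\begin{itemize}
\item The diagonal terms alone give $\int_{\mathbb{T}}|\widehat{f_i}|^4\geqslant(\sum_n f_i(n)^2)^2=N^{4-\gamma_i+o(1)}$.
\item On the other hand $\|\widehat{f_i}\|_2^2=\sum_n f_i(n)^2=N^{2-\gamma_i/2+o(1)}$.
\item H\"older gives $\int_{\mathbb{T}}|\widehat{f_i}|^4\leqslant\|\widehat{f_i}\|_2^{2(q-4)/(q-2)}\big(\int_{\mathbb{T}}|\widehat{f_i}|^q\big)^{2/(q-2)}$.
\item Combining these, any bound $\int_{\mathbb{T}}|\widehat{f_i}|^q\ll N^{q-1}$ forces $q\geqslant 4/\gamma_i$.
\end{itemize}
Epsilon-removal absorbs $N^{\epsilon}$ losses at the cost of an arbitrarily small increase of $q$. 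A power loss $N^{c(1-\gamma_i)}$ costs an increase that depends on $\gamma_i$. Keeping $q<5$, as Salmensuu's proposition requires for $s=5$, is the real difficulty.

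The paper handles this with a genuine restriction theorem for Piatetski--Shapiro-weighted squares. It runs the argument of Ren--Sun--Zhang--Zhang (Lemma 4.1), with their (1.7) replaced by Lemma 10 of Akbal--G\"ulo\u{g}lu. This gives $\int_{\mathbb{T}}|\widehat{\psi}|^{\nu}\ll N^{\nu-1}(\log N)^{\nu}$ for every $|\psi|\leqslant\tau$ and every $\nu>4+\frac{26(1-\gamma)}{3\gamma-2}$. The logarithm is then removed as in Ren--Zhang--Zhang, Proposition 5.1. The requirement $\frac{26(1-\gamma)}{3\gamma-2}<1$ is precisely $\gamma>\frac{28}{29}$.

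\textbf{You have therefore also misplaced the bottleneck.} The Fourier-closeness statement (iii) is Lemma 4.5 of Ren--Zhang--Zhang. It holds with a power saving for every $\gamma>\frac{75}{82}$, and the paper simply quotes it. So you should not expect your Type I/II balancing to produce $\frac{28}{29}$. Even a complete proof of (iii) leaves the theorem unproved without a correct restriction estimate.

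Two smaller points:
\begin{itemize}
\item The transference principle needs $f_i\leqslant\nu_i$ exactly, so the all-primes weights cannot serve as the majorant. The paper takes $\nu_i=f_i$ and combines (iii) with Chow's $W$-tricked estimate for all prime squares to get the pseudorandomness condition.
\item Your treatment of the mean condition does match the paper: the comparison at $\theta=j/W'$ followed by Siegel--Walfisz.
\end{itemize}
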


\begin{remark*}
In order to compare our result with the previous results of Zhang and Zhai \cite{Zhang-Zhai-2005},
we list the numerical results as follows:
\begin{equation*}
\frac{249}{256}=0.97265625,\qquad \frac{28}{29}=0.96551724\dots.
\end{equation*}
\end{remark*}

\begin{notation}
For any positive integer $N$, let $[N]:=\{1,\ldots,N\}$. We write $\mathbb{T}$ for $\mathbb{R}/\mathbb{Z}$.
For a set $\mathcal{A}$, we write $\mathds{1}_{\mathcal{A}}(x)$ for its characteristic function. The letter $p$, with or without subscript, always denotes a prime number. Denote by $\mathcal{P}$ the set of all the primes. As usual, we use $\varphi(n)$ to denote Euler's function, and $\lfloor x\rfloor$ to denote the integral part of $x\in\mathbb{R}$. Also, we write $e(t)=\exp(2\pi it)$. $f(x)\ll g(x)$ means that $f(x)=O(g(x))$; $f(x)\asymp g(x)$ means that $f(x)\ll g(x)\ll f(x)$.
Let $f:\mathcal{B}\rightarrow\mathbb{C}$ be a function and $\mathcal{B}_1$ be a non--empty finite subset of $\mathcal{B}$. Denote by $\mathbb{E}_{x\in\mathcal{B}_1}f(x)$ the average value of $f$ on $\mathcal{B}_1$, i.e.,
\begin{equation*}
\mathbb{E}_{x\in\mathcal{B}_1}f(x)=\dfrac{1}{|\mathcal{B}_1|}\sum\limits_{x\in\mathcal{B}_1}f(x).
\end{equation*}
	For finitely supported functions $ f,g:\mathbb{Z}\rightarrow \mathbb{C} $ , we define the convolution
$f\ast g$ by
\begin{equation*}
f\ast g\,(n)=\sum\limits_{a+b=n}f(a)g(b).
\end{equation*}
For a finitely supported function $f:\mathbb{Z}\rightarrow\mathbb{C}$, then the Fourier transform of $f$ is defined as
\begin{equation*}
\widehat{f}(\alpha)=\sum\limits_{n\in\mathbb{Z}}f(n)e(n\alpha),\qquad \alpha\in\mathbb{T}.
\end{equation*}
The $L^\infty$--norm and $L^r$--norm of function $f:\mathbb{T}\rightarrow\mathbb{C}$ are defined respectively as
\begin{equation*}
\big\|f\big\|_\infty=\sup_{\alpha}\big|f(\alpha)\big|,\qquad
\big\|f\big\|_{r}=\bigg(\int_{\mathbb{T}}\big|f(\alpha)\big|^{r}\mathrm{d}\alpha\bigg)^{1/r}.
\end{equation*}
\end{notation}

\section{Transference principle}

In order to prove Theorem \ref{Theorem-1}, we need to utilize the following transference principle, which is established by Salmensuu \cite{Salmensuu-2020}.
\begin{proposition}\label{transference principle}
Let $s\geqslant3$ and $\varepsilon,\eta\in(0,1)$. Suppose that $N$ is a natural number. For each
$i\in\{1,\dots,s\}$, let $f_i:[N]\to\mathbb{R}_{\geqslant0}$ be a function which satisfies the following postulations.

\noindent
(1) (Mean value condition) For each arithmetic progression $P\subseteq [N]$ with $|P|\geqslant\eta N$, we have
\begin{equation*}		
\mathbb{E}_{n\in P}f_i(n)\geqslant1/s+\varepsilon;
\end{equation*}	
	
\noindent		
(2) (Pseudorandomness condition) There exists a majorant $\nu_i:[N]\to\mathbb{R}_{\geqslant0}$ with
$f_i\leqslant\nu_i$ pointwise, such that $\|\widehat{\nu_i}-\widehat{\mathds{1}_{[N]}}\|_\infty\leqslant\eta N$;

\noindent	
(3) (Restriction estimate) We have $\|\widehat{f_i}\|_q\leqslant KN^{1-1/q}$ for some $q\in(s-1,a)$ and
$K\geqslant 1$.

\noindent	
Then, for each $n\in[N/2,N]$, there holds
\begin{equation*}			
f_1*\dots*f_s(n)\geqslant\big(c(\varepsilon)-O_{\varepsilon,K,q}(\eta)\big)N^{s-1},
\end{equation*}			
where $c(\varepsilon)>0$ is a constant depending only on $\varepsilon$.
\end{proposition}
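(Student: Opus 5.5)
This proposition is Salmensuu's transference principle, so I would prove it by reproducing the dense-model-plus-arithmetic-regularity argument, in the form of Green's and Salmensuu's proofs. The plan has three stages. First, decompose each $f_i$ as $f_i = g_i + h_i$, where $g_i$ is bounded and dense and $h_i$ is Fourier-uniform. Second, show that the convolution of the $g_i$ is large because of the mean value condition. Third, show that the terms involving some $h_i$ are negligible, using the restriction estimate.

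\emph{Step 1 (dense model).} Use $\|\widehat{\nu_i}-\widehat{\mathds 1_{[N]}}\|_\infty\leqslant\eta N$ together with $0\leqslant f_i\leqslant\nu_i$. Via the Green--Tao/Reingold--Trevisan--Tulsiani--Vadhan dense model theorem, or via Green's Bohr-set construction, produce $g_i:[N]\to[0,1+O(\eta^{c})]$ with $\|\widehat{f_i}-\widehat{g_i}\|_\infty\leqslant \delta(\eta)N$, where $\delta(\eta)\to0$ as $\eta\to0$. Concretely, I would take $g_i = f_i * \beta_B$, with $\beta_B$ the normalized indicator of a Bohr set $B$ built from the large spectrum of $f_i$. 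The restriction estimate bounds the size of that spectrum, since $\|\widehat{f_i}\|_q\leqslant KN^{1-1/q}$ gives at most $O_{K,q}(\delta^{-q})$ large Fourier coefficients. The majorant bounds $g_i$ by $\nu_i*\beta_B\approx 1$.

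\emph{Step 2 (mean value transfers, and counting for bounded functions).} Because $B$ is a regular Bohr set of bounded rank, $g_i$ is approximately constant along long progressions. The hypothesis $\mathbb E_{n\in P}f_i\geqslant 1/s+\varepsilon$ on every progression $P$ with $|P|\geqslant\eta N$ therefore passes to $g_i$, up to an $O(\eta)$ loss. Next, apply arithmetic regularity (Green's arithmetic regularity lemma, or a Bohr-set averaging) to the bounded functions $g_i$. This gives an $s$-fold "popular sums" statement: if bounded functions each have density exceeding $1/s+\varepsilon$ on all long progressions, then $g_1*\cdots*g_s(n)\geqslant c(\varepsilon)N^{s-1}$ for every $n\in[N/2,N]$. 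This is the Cauchy--Davenport/Kneser-type combinatorial core. In the structured model I would prove it by working on a cyclic group or Bohr set, where $\sum_i \text{densities}>1$ forces every target to be hit; the uniform remainder does not affect the count.

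\emph{Step 3 (error terms).} Write $f_1*\cdots*f_s(n)-g_1*\cdots*g_s(n)$ as a telescoping sum. Each term is an integral $\int \widehat{F_1}\cdots(\widehat{f_j}-\widehat{g_j})\cdots\widehat{F_s}\,e(-n\alpha)\,d\alpha$. Bound it by $\|\widehat{f_j}-\widehat{g_j}\|_\infty^{s-q}$ times $L^q$ norms, using Hölder and the fact that $q<s$, so that $s-1$ factors in $L^{q}$ sit inside $L^{s-1}$-type control. The restriction estimate holds for $f_i$ by hypothesis, and it holds for $g_i$ trivially via Parseval, since $g_i$ is bounded. The total error is $O_{K,q}(\delta^{s-q})N^{s-1}$, which is $O_{\varepsilon,K,q}(\eta)$ after renaming parameters.

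The main obstacle is Step 2, the combinatorial lower bound for bounded functions with density above $1/s$ on progressions that holds for \emph{every} $n\in[N/2,N]$. Sum-free-type examples show the threshold $1/s$ is sharp, and handling the boundary near $N/2$ requires care. I would first try to reduce to $\mathbb Z/M\mathbb Z$ with $M\approx 2N$, quotient by a regular Bohr set, and then apply Kneser's theorem to the level sets of $g_i$.
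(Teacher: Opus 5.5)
The paper gives no argument for this proposition. It cites Lemma~1 of Salmensuu \cite{Salmensuu-2020}, the $s$-summand extension of the Matom\"{a}ki--Maynard--Shao transference principle. The printed range $q\in(s-1,a)$ should read $q\in(s-1,s)$, as you assumed. Your overall architecture matches that argument, and Step~3 is correct. Write $|\widehat{f_j}-\widehat{g_j}|$ as its $(s-q)$-th power times its $(q-s+1)$-th power, then apply H\"{o}lder with the other $s-1$ factors in $L^q$ and the latter power in $L^{q/(q-s+1)}$; this gives $O_{K,q}(\delta^{s-q})N^{s-1}$. One correction there: to get a loss linear in $\eta$, you must fix $\delta=\delta(\varepsilon,K,q)$ first and absorb this term into $c(\varepsilon)$. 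The Bohr set built from that $\delta$ then gives $g_i\leqslant 1+O_\delta(\eta)$; making $\delta$ a function of $\eta$ does not yield an $O(\eta)$ loss. The genuine gap is the step you call the main obstacle. You claim that functions $g_i:[N]\to[0,1]$ with $\mathbb{E}_{n\in P}g_i(n)\geqslant 1/s+\varepsilon$ on all long progressions satisfy $g_1*\cdots*g_s(n)\geqslant c(\varepsilon)N^{s-1}$ for every $n\in[N/2,N]$. That is the substantive content of the cited lemma, and you neither prove it nor cite a source for it.

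The route you outline would not prove it. In $\mathbb{Z}/M\mathbb{Z}$ with $M\approx 2N$, sums of $s\geqslant 3$ elements of $[N]$ reach $sN>M$, so cyclic counts do not control integer representations of $n$. If you take $M>sN$ to avoid wraparound, each $g_i$ has density below $1/s$ in $\mathbb{Z}/M\mathbb{Z}$, so no ``densities sum to more than $1$'' argument is available. The threshold $1/s$ is relative to $[N]$, and the obstructions are archimedean as well as local. For $s=3$, $\mathds{1}_{[0.6N,N]}$ has density $0.4$ but represents no $n\leqslant N$; it is ruled out only by the hypothesis on initial segments such as $[1,N/2]$. Likewise, the indicator of the even numbers is ruled out only by the hypothesis on the odd numbers. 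Moreover, a Bohr set is not a subgroup, so there is nothing to quotient by. Kneser's theorem also yields only coverage, not $c(\varepsilon)N^{s-1}$ representations. What is needed is a robust counting statement for the structured part of a regularity decomposition, using the mean condition on residue classes and on subintervals simultaneously. Finally, your justification for transferring the mean condition to $g_i$ (that $g_i$ is ``approximately constant along long progressions'') is false. The correct reason is that $\sum_{n\in P}f_i*\beta_B(n)$ is an average over $b\in B$ of sums of $f_i$ over the progressions $P-b$. This requires $B$ to be localized, so that these translates stay essentially inside $[N]$: include the frequency $1$, so $|b|\leqslant\rho M$ with $\rho$ small compared with the length threshold.
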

\begin{proof}
See Lemma 1 of Salmensuu \cite{Salmensuu-2020}.
\end{proof}

\begin{lemma}\label{trans}
Let $\theta\in(0,1)$ and $\gamma\in(\frac{75}{82},1)$. Let $\delta^*$ be any positive constant satisfying
\begin{equation*}
82(1-\gamma)+87\delta^*<7.
\end{equation*}	
Then, for $0<\delta<\delta_0=\min\{\gamma-\frac{1}{2},\frac{\delta^*}{3}\}$, there holds
\begin{equation*}
\frac{1}{\gamma}\sum_{\substack{p\leqslant x\\ p\in\mathscr{N}_{\gamma}}}p^{2-\gamma}(\log p)\cdot e(\theta p^2)
=\sum_{p\leqslant x}p(\log p)\cdot e(\theta p^2)+O(x^{2-\delta-\varepsilon}),
\end{equation*}
where $\varepsilon>0$ is arbitrarily small.
\end{lemma}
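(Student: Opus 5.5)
The plan is the standard Piatetski--Shapiro reduction. An integer $p$ lies in $\mathscr{N}_\gamma$ exactly when $\lfloor -p^\gamma\rfloor-\lfloor-(p+1)^\gamma\rfloor=1$, and this difference is always $0$ or $1$ because $\gamma<1$. With $\psi(t)=t-\lfloor t\rfloor-\frac12$ we get
\begin{equation*}
\mathds{1}_{\mathscr{N}_\gamma}(p)=(p+1)^\gamma-p^\gamma+\psi(-(p+1)^\gamma)-\psi(-p^\gamma).
\end{equation*}
Expanding $(p+1)^\gamma-p^\gamma=\gamma p^{\gamma-1}+O(p^{\gamma-2})$, multiplying by $\gamma^{-1}p^{2-\gamma}\log p\, e(\theta p^2)$ and summing over $p\leqslant x$ gives three pieces. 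The main term is exactly $\sum_{p\leqslant x}p\log p\, e(\theta p^2)$. The Taylor remainder contributes $O(\sum_{p\leqslant x}\log p)=O(x)$, which is acceptable since $\delta<\frac12$ forces $2-\delta-\varepsilon>1$. So everything reduces to showing
\begin{equation*}
\sum_{p\leqslant x}p^{2-\gamma}(\log p)\,e(\theta p^2)\big(\psi(-(p+1)^\gamma)-\psi(-p^\gamma)\big)\ll x^{2-\delta-\varepsilon},
\end{equation*}
uniformly in $\theta$.

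For this I first split into dyadic ranges $P<p\leqslant 2P$ and remove the smooth weight $p^{2-\gamma}$ by partial summation. This reduces the task to bounding $\sum_{P<n\leqslant P'}\Lambda(n)e(\theta n^2)(\psi(-(n+1)^\gamma)-\psi(-n^\gamma))\ll P^{\gamma-\delta-\varepsilon}$; passing from primes to $\Lambda$ costs only $O(P^{1/2})$. Next I insert Vaaler's trigonometric approximation of $\psi$ with truncation $H=P^{1-\gamma+\delta+2\varepsilon}$. The error part of Vaaler is non-negative and majorised by a short exponential sum, which is handled by the same argument below. In the truncated expansion I write $e(h(n+1)^\gamma)-e(hn^\gamma)=e(hn^\gamma)\phi_h(n)$ with $\phi_h(n)=e(h((n+1)^\gamma-n^\gamma))-1\ll hP^{\gamma-1}$, and remove $\phi_h$ by partial summation. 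It then suffices to show
\begin{equation*}
\sum_{1\leqslant h\leqslant H}\Big|\sum_{P<n\leqslant P'}\Lambda(n)\,e(\theta n^2+hn^\gamma)\Big|\ll P^{1-\delta-\varepsilon}H,
\end{equation*}
that is, a saving of $P^{\delta+\varepsilon}$ over the trivial bound, on average over $h$ and uniformly in $\theta$.

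To prove this I decompose $\Lambda$ by Heath-Brown's identity into Type I sums $\sum_m a_m\sum_\ell e(\theta m^2\ell^2+hm^\gamma\ell^\gamma)$ and Type II sums $\sum_m\sum_\ell a_mb_\ell e(\ldots)$. For Type II sums I apply Cauchy--Schwarz and Weyl--van der Corput differencing in $\ell$ with a shift $q\leqslant Q$. The key point is that the difference of the phases removes $\theta$: the $\theta$-part becomes linear in $m$, namely $\theta m^2(2\ell q+q^2)$, while the $h$-part becomes the monomial $hm^\gamma((\ell+q)^\gamma-\ell^\gamma)$. Summing over $m$ with a linear phase plus a monomial of derivative size $\asymp hqP^{\gamma-1}/L$ is a standard exponent-pair problem, essentially the one in Zhai and Zhang--Zhai. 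In the Type I sums the inner variable $\ell$ is smooth, and one more differencing removes the $\theta$ dependence entirely (the $\theta$-term becomes linear in $\ell$), after which the same van der Corput estimates apply.

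The main obstacle is the Type II estimate: the admissible ranges for $m$ must be balanced against $H$ so that the saving $P^{\delta+\varepsilon}$ survives with $\delta\approx\delta^*/3$. I expect the numerology $82(1-\gamma)+87\delta^*<7$ to emerge from optimising $Q$ with the exponent pair $(\frac12,\frac12)$, or an $A$/$B$-process variant, together with Heath-Brown's ranges. The extra factor $3$ in $\delta_0$ absorbs the loss from the double differencing. If the bookkeeping fails, I will fall back on the estimate from Zhang--Zhai's paper for these exact sums and recheck the exponents against the stated condition.
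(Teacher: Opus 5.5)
The paper gives no argument of its own for this lemma; it imports it from Lemma 4.5 of Ren--Zhang--Zhang \cite{Ren-Zhang-Zhang-2024}. Your outer reduction is the right shell: the $\psi$-formula for the indicator of $\mathscr{N}_\gamma$, the Taylor main term, and the $O(P^{1/2})$ cost of prime powers, which is where $\delta<\gamma-\frac12$ comes from. However, your reduction target is wrong by a factor $H$.

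With Vaaler coefficients $\ll 1/h$ and $\phi_h\ll hP^{\gamma-1}$, partial summation bounds the main part by $P^{\gamma-1}\sum_{h\leqslant H}\max_{P'}|S_h(P')|$, where $S_h(P')=\sum_{P<n\leqslant P'}\Lambda(n)e(\theta n^2+hn^\gamma)$. For this to be $\ll P^{\gamma-\delta-\varepsilon}$ you need $\sum_{h\leqslant H}|S_h|\ll P^{1-\delta-\varepsilon}$. Your target $P^{1-\delta-\varepsilon}H$ only yields $P^{\gamma-\delta-\varepsilon}H=P^{1+\varepsilon}$. In your set-up this means that, on average over $h\leqslant H$, $S_h$ must save a factor exceeding $P^{1-\gamma+\delta}$ over the trivial bound $P$, not merely $P^{\delta+\varepsilon}$. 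The same holds for $\sum_n\Lambda(n)e(hn^\gamma)$ in the Vaaler error term. A saving beyond $P^{1-\gamma}$ is exactly what makes Piatetski--Shapiro problems hard and what forces $\gamma$ close to $1$.

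Second, your mechanism for uniformity in $\theta$ fails for the Type II sums. After Cauchy--Schwarz over $m$ and differencing in $\ell$, the $m$-sum has phase $\theta(2\ell q+q^2)m^2+h\big((\ell+q)^\gamma-\ell^\gamma\big)m^\gamma$. This is quadratic in $m$ with an arbitrary real leading coefficient, not linear. You cannot sum over $\ell$ instead, since after differencing $\ell$ carries the arbitrary weights $b_{\ell+q}\overline{b_\ell}$. What you describe is the mechanism of the linear case $e(\alpha n)$, where $\alpha m\ell$ becomes $\alpha mq$. In the quadratic case you need a further differencing in $m$ (for instance a third-derivative test), with another square-root loss, and the numerology changes.

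Finally, the decisive estimate is never carried out. That estimate is a bound for $\sum_{h\leqslant H}|S_h|$, uniform in $\theta$, valid for $\gamma>\frac{75}{82}$ under $82(1-\gamma)+87\delta^*<7$. You only expect these conditions to emerge, and you do not check that Zhang--Zhai's estimates deliver this range; their final result reaches only $\gamma>\frac{249}{256}$. Since that exponential-sum bound is the entire content of the lemma, the proposal reduces the statement to an unproved, and misstated, estimate rather than proving it.
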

\begin{proof}
See Lemma 4.5 of Ren, Zhang and zhang \cite{Ren-Zhang-Zhang-2024}.
\end{proof}
For $m\in\mathbb{N}^+$, define
\begin{equation*}
Z(m):=\big\{b\in\mathbb{Z}_{m}:\,\exists\, t\in\mathbb{Z}_{m},\,(t,m)=1,\,t^2\equiv b\!\!\!\!\pmod{m}\big\} ,
\end{equation*}
where $\mathbb{Z}_{m}:=\mathbb{Z}/m\mathbb{Z}$. For any $b\in Z(m)$, define
\begin{equation*}
\sigma_{m}(b):=\#\big\{z\in[m]:\,z^2\equiv b\!\!\!\!\pmod{m}\big\} .
\end{equation*}
Let $n_{0}$ be a sufficiently large positive integer and $w:=\log\log\log n_{0}$. Let
\begin{equation*}
W:=8\prod\limits_{2<p\leqslant w}p,
\end{equation*}
and
\begin{equation*}
N:=\lfloor n_{0}/W\rfloor +1.
\end{equation*}
For any $\gamma\in(1/2,1)$ and $b\in[W]$ with $b\in Z(W)$, define function
$f_{b,\gamma}:[N]\rightarrow \mathbb{R}_{\geqslant0}$ by
\begin{equation*}
f_{b,\gamma}(n) :=
 \begin{cases}
 \displaystyle\frac{2\varphi(W)}{W\sigma_{W}(b)}\gamma^{-1}p^{2-\gamma}\log p,
      & \textrm{if\, $Wn+b=p^2$ and $p\in\mathscr{N}_{\gamma}\cap\mathcal{P}$}, \\
	\quad 0, & \text{otherwise}.
		\end{cases}
\end{equation*}
\begin{lemma}\label{pseudorandomness}
Suppose that $\gamma\in(\frac{75}{82},1)$. For $b\in[W]$ with $b\in Z(W)$, one has
\begin{equation*}
\big\|\widehat{f_{b,\gamma}}-\widehat{1_{[N]}} \big\|_\infty=o(N).
\end{equation*}
\end{lemma}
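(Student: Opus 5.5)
Write $\alpha\in\mathbb{T}$ and set $x:=\sqrt{WN+W}$. By definition,
\begin{equation*}
\widehat{f_{b,\gamma}}(\alpha)=\frac{2\varphi(W)}{W\sigma_W(b)}\cdot\frac1\gamma\sum_{\substack{p\leqslant x,\ p\in\mathscr{N}_\gamma\\ p^2\equiv b\ (\mathrm{mod}\ W),\ p^2>b}}p^{2-\gamma}(\log p)\,e\Big(\frac{p^2-b}{W}\alpha\Big).
\end{equation*}
The plan is to remove the Piatetski--Shapiro restriction first, and then treat the resulting sum over all primes by the standard $W$-trick argument for squares of primes.

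\textbf{Step 1: remove the Piatetski--Shapiro condition.} Detect the congruence by characters modulo $W$, or equivalently by additive characters $\frac1W\sum_{h\bmod W}e(h(p^2-b)/W)$. This turns the sum into a linear combination of sums of the form
\begin{equation*}
\frac1\gamma\sum_{p\leqslant x,\,p\in\mathscr{N}_\gamma}p^{2-\gamma}(\log p)\,e(\theta p^2),\qquad \theta=\frac{\alpha+h}{W}.
\end{equation*}
Apply Lemma \ref{trans} to each such sum; this requires $\gamma>\frac{75}{82}$, and we take a fixed small $\delta$. Each sum can then be replaced by $\sum_{p\leqslant x}p\log p\,e(\theta p^2)$ with error $O(x^{2-\delta})$. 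The prefactor $\frac{2\varphi(W)}{W\sigma_W(b)}$ and the averaging over $h$ cost at most a factor $W$. Since $W\ll(\log\log n_0)^{2}$ is tiny, the total error is $O(Wx^{2-\delta})=o(N)$, because $x^2\asymp WN$. Lemma \ref{trans} is stated for $p\leqslant x$ with a fixed phase. The uniformity of its error term in $\theta$ is implicit in its statement, so the lower-order boundary terms (small $p\mid W$, or $p^2\leqslant b$) are $O(W)$ and negligible.

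\textbf{Step 2: the sum over all primes.} After Step 1 we must show
\begin{equation*}
\widehat{g_b}(\alpha):=\frac{2\varphi(W)}{W\sigma_W(b)}\sum_{\substack{p\leqslant x\\ p^2\equiv b\,(W)}}p\log p\, e\Big(\frac{p^2-b}{W}\alpha\Big)=\widehat{1_{[N]}}(\alpha)+o(N)
\end{equation*}
uniformly in $\alpha$. This is the $W$-tricked majorant for squares of primes, the quadratic analogue of the Green setup used by Salmensuu \cite{Salmensuu-2020}. Split $\alpha$ into major arcs $|\alpha-a/q|\leqslant Q/N$ with $q\leqslant Q$, where $Q$ is a small power of $\log N$, and the complementary minor arcs.

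On the minor arcs, Weyl/Vinogradov-type bounds for $\sum_p \log p\,e(\beta p^2)$, say Kumchev's or Harman's, give savings of a power of $\log$. Via the substitution $p=Wm+r$ with $r^2\equiv b$ and $\beta=\alpha/W$, these give $o(N)$, while $\widehat{1_{[N]}}(\alpha)$ is $\ll N/Q$ there.

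On the major arcs, use the Siegel--Walfisz theorem for primes $p\equiv r\pmod{Wq}$ together with partial summation. Split by the residues $r\bmod W$ with $r^2\equiv b$ (there are $\sigma_W(b)$ of them, the factor $2$ coming from $p\mapsto$ its square and the density $x\,\mathrm{d}x$). The main term becomes $\frac{1}{\varphi(q')}$ times complete quadratic Gauss-type sums $S(q,a)$ restricted to units, times $\int_0^N e(t\alpha)\,dt$. Because all primes $\leqslant w$ divide $W$, any $q>1$ with $q\leqslant Q$ has a prime factor outside the reach of the restricted Gauss sums, or else the sum is small; the standard bound $|S(q,a)|/\varphi(q)\ll q^{-1/2+\varepsilon}$ for $(q,W)=1$ then gives $o(N)$ unless $q=1$. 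For $q=1$ the main term matches $\widehat{1_{[N]}}(\alpha)$ up to $o(N)$.

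\textbf{Main obstacle.} The hardest part is Step 2, specifically the handling of $q$ sharing factors with $W$ versus coprime to $W$. The $W$-trick is designed so that residues mod $q\mid W$ are exactly equidistributed after the normalisation by $\sigma_W(b)$. For $q$ coprime to $W$ one needs the Gauss sum decay with $q>w$, giving a saving $w^{-1/2}\to0$. Once that bookkeeping is done carefully, Step 1 makes the lemma an immediate consequence.
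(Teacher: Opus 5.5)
Your proposal follows the paper's proof. Step 1 is exactly the paper's reduction: detect $p^2\equiv b \pmod W$ by additive characters, apply Lemma \ref{trans} with $\theta=(\alpha+j)/W$, and get $\|\widehat{f_{b,\gamma}}-\widehat{\lambda_b}\|_\infty\ll N^{1-\varepsilon}$. Step 2 is the estimate $\|\widehat{\lambda_b}-\widehat{\mathds{1}_{[N]}}\|_\infty=o(N)$, which the paper does not prove but simply cites from Chow and Gao.

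Two points in your sketch of Step 2 are imprecise and worth fixing, though neither changes the approach:
\begin{itemize}
\item \emph{Major arcs.} The clean mechanism is this. The local factor vanishes exactly whenever $q>1$ has a prime factor dividing $W$; when $2\,\|\,q$ this vanishing comes from summing over all roots $r$ of $r^2\equiv b \pmod W$, not from the sum over $m$. When $q>1$ is coprime to $W$, you have $q>w$, and the unit Gauss-sum bound gives $\ll w^{-1/2+\varepsilon}$.
\item \emph{Minor arcs.} You need a Vinogradov--Hua type bound with genuine $(\log x)^{-A}$ savings. Harman's bound loses a factor $x^{\varepsilon}$, which is too large when $q$ is of size a power of $\log x$.
\end{itemize}
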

\begin{proof}
Define function $\lambda_{b}:\,[N]\rightarrow \mathbb{R}_{\geqslant0}$ by
\begin{equation*}
\lambda_{b}(n) :=
 \begin{cases}
  \displaystyle\frac{2\varphi(W)}{W\sigma_{W}(b)}p\log p,
    & \textrm{if\,$Wn+b=p^2$ and $p\in\mathcal{P}$}, \\
    \quad 0, & \text{otherwise}.
 \end{cases}
\end{equation*}
By the definition of $f_{b,\gamma}$, we have
\begin{align}\label{Fourier transform of f}
         \widehat{f_{b,\gamma}}(\alpha)
= & \,\, \dfrac{2\varphi(W)}{W\sigma_{W}(b)}\gamma^{-1}
         \sum_{\substack{b<p^{2}\leqslant WN+b\\p^2\equiv b \!\!\!\!\!\pmod{W}\\ p\in\mathscr{N}_\gamma}}
         p^{2-\gamma}(\log p)\cdot e\bigg(\frac{p^{2}-b}{W}\cdot\alpha\bigg)
                \nonumber \\
= & \,\, \dfrac{2\varphi(W)}{W\sigma_{W}(b)}\gamma^{-1}
         \sum_{\substack{p\leqslant\sqrt{WN+b}\\ p^2\equiv b \!\!\!\!\!\pmod{W}\\ p\in\mathscr{N}_\gamma}}
         p^{2-\gamma}(\log p)\cdot e\bigg(\frac{p^{2}-b}{W}\cdot\alpha\bigg)+O(W).
\end{align}
Trivially, one has
\begin{equation*}
\frac{1}{W}\sum_{1\leqslant j\leqslant W}e\bigg(j\cdot\frac{p^{2}-b}{W}\bigg)=
\begin{cases}
1, & \textrm{if\,\,$p^2 \equiv b \!\!\!\!\pmod{W}$} , \\
0, & \textrm{otherwise},
\end{cases}
\end{equation*}
which combined with \eqref{Fourier transform of f} yields that
\begin{equation}\label{Fourier transform of f (1)}
\widehat{f_{b,\gamma}}(\alpha)=\frac{2\varphi(W)}{W^{2}\sigma_{W}(b)}
\sum_{1\leqslant j\leqslant W}e\bigg(-\frac{b}{W}(\alpha+j)\bigg)\gamma^{-1}
\sum_{\substack{p\leqslant\sqrt{WN+b}\\ p\in\mathscr{N}_\gamma}}p^{2-\gamma}(\log p)\cdot
e\bigg(p^2\cdot\frac{\alpha+j}{W}\bigg)+O(W).
\end{equation}
Similarly, we have
\begin{equation}\label{Fourier transform of lambda}
\widehat{\lambda_{b}}(\alpha)=\frac{2\varphi(W)}{W^{2}\sigma_{W}(b)}
\sum_{1\leqslant j\leqslant W}e\bigg( -\frac{b}{W}(\alpha+j)\bigg)
\sum_{p\leqslant\sqrt{WN+b}}p(\log p)\cdot e\bigg(p^2\cdot\frac{\alpha+j}{W}\bigg)+O(W).
\end{equation}
By (\ref{Fourier transform of f (1)}), (\ref{Fourier transform of lambda}) and Lemma \ref{trans}, we have
\begin{align*}
           \big\|\widehat{f_{b,\gamma}}-\widehat{\lambda_{b}}\big\|_{\infty}
\ll & \,\, W+\sup_{\alpha\in\mathbb{T}}\frac{\varphi(W)}{W^{2}\sigma_{W}(b)}\sum_{1\leqslant j\leqslant W}
           \Bigg|\frac{1}{\gamma}\sum_{\substack{p\leqslant\sqrt{WN+b}\\ p\in \mathscr{N}_\gamma}}
           p^{2-\gamma}(\log p)\cdot e\bigg(p^2\cdot\frac{\alpha+j}{W}\bigg)
				 \nonumber \\
    & \,\, -\sum_{p\leqslant\sqrt{WN+b}}p(\log p)\cdot e\bigg(p^2\cdot\frac{\alpha+j}{W}\bigg)\Bigg|
                 \nonumber \\
\ll & \,\, N^{1-\varepsilon}.
\end{align*}
Following the arguments, i.e., Section 3 and Section 4 of Chow \cite{Chow-2018}, or Section 5 of Gao \cite{Gao-2025}, we can show that
\begin{equation*}		
\big\|\widehat{\lambda_{b}}-\widehat{\mathds{1}_{[N]}}\big\|_{\infty}=o(N).
\end{equation*}
Therefore, we have
\begin{equation*}
\big\|\widehat{f_{b,\gamma}}-\widehat{\mathds{1}_{[N]}}\big\|_{\infty}\leqslant \big\|\widehat{f_{b,\gamma}}-\widehat{\lambda_{b}}\big\|_{\infty}+
\big\|\widehat{\lambda_{b}}-\widehat{\mathds{1}_{[N]}} \big\|_{\infty}=o(N).
\end{equation*}
This completes the proof of Lemma \ref{pseudorandomness}.
\end{proof}

\begin{lemma}\label{Restriction estimate 1}
Let $ b\in [W] $ be subject to $ b\in Z(W)$. Let $\psi:[N]\to\mathbb{C}$ satisfy $|\psi|\leqslant\tau$,
where $\tau:[N]\rightarrow\mathbb{R}_{\geqslant0}$ is defined by
\begin{equation*}
\tau(n)=
\begin{cases}
\dfrac{2}{\sigma_{W}(b)}\gamma^{-1}t^{2-\gamma}, & \textrm{if\, $Wn+b=t^2$\,and $t\in\mathscr{N}_\gamma$}, \\
			0, & \textrm{otherwise}.
\end{cases}
\end{equation*}
Then, for any $\nu>4+\frac{26(1-\gamma)}{3\gamma-2}$, we have
\begin{equation*}
\int_{\mathbb{T}}\big|\widehat{\psi}(\alpha)\big|^\nu\mathrm{d}\alpha\ll N^{\nu-1}H^\nu,
\end{equation*}
where $H=\log\sqrt{WN}$.
\end{lemma}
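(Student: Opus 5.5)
The plan is a Bourgain-type restriction argument in the style of Green and Chow: an $\epsilon$-removal argument applied to large spectra, together with a major/minor arc estimate for the majorant $\tau$. Write $X=\sqrt{WN+b}$, so that $\widehat{\psi}(\alpha)=\sum_{t\le X}a_t\,e(\alpha(t^2-b)/W)$, where $|a_t|\le \frac{2}{\sigma_W(b)}\gamma^{-1}t^{2-\gamma}\mathds{1}_{\mathscr N_\gamma}(t)\mathds 1_{t^2\equiv b\,(W)}$. On average the weight $\gamma^{-1}t^{1-\gamma}\mathds 1_{\mathscr N_\gamma}(t)$ has mean about $1$, so $\tau$ behaves like $\frac{2t}{\sigma_W(b)}$ on $t^2\equiv b\pmod W$, and $\sum_n\tau(n)\asymp N$.

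\textbf{Step 1: even-moment bounds.} First I will prove an $L^4$ bound $\int_{\mathbb T}|\widehat\psi|^4\ll N^{3}H^{C}$, possibly with a small loss $N^{3+c(1-\gamma)}$. Expanding the fourth moment counts solutions of $t_1^2+t_2^2=t_3^2+t_4^2$ weighted by $\prod t_i^{2-\gamma}$, with $t_i\in\mathscr N_\gamma$ and in a fixed class mod $W$. Dropping the restriction $t_i\in\mathscr N_\gamma$ costs at most $\prod t_i^{1-\gamma}$ relative to the natural density. The classical divisor bound for sums of two squares then gives $X^{2}\log X$ solutions with weight $\approx X^{4(2-\gamma)}$ against a density $X^{4\gamma}$, so the total loss is $X^{4(1-\gamma)}$. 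This crude loss is exactly what generates the $1-\gamma$ terms in the exponent.

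\textbf{Step 2: sup bound away from the major arcs.} Next I will prove that $\tau$ has small Fourier transform off the major arcs. For this I will write $\mathds 1_{\mathscr N_\gamma}(t)=\lfloor -t^\gamma\rfloor-\lfloor-(t+1)^\gamma\rfloor$, which equals $\gamma t^{\gamma-1}+O(t^{\gamma-2})$ plus the difference $\Psi(-(t+1)^\gamma)-\Psi(-t^\gamma)$ with $\Psi(x)=x-\lfloor x\rfloor-1/2$. The main term turns $\widehat\tau$ into a Weyl sum $\sum_t t\,e(\alpha t^2/W)$ twisted by the congruence. By Weyl's inequality this is $\ll X^{2}(q^{-1/2}+X^{-1/2}+(q/X^{2})^{1/2})$ near $a/q$, and it is of size $\ll Nq^{-1/2}(1+N|\beta|)^{-1/2}$ on the major arcs.

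The error term is the main obstacle. Expanding $\Psi$ via Vaaler's trigonometric polynomial of length $X^{1-\gamma+\epsilon}$ produces exponential sums $\sum_t t^{2-\gamma}e(ht^\gamma+\alpha t^2/W)$. These must be bounded uniformly in $\alpha$, by van der Corput (second or third derivative tests, or an exponent pair), to get a saving of $X^{2-\kappa}$ with $\kappa>0$ linked to $\gamma$. I would try the third derivative test first, since the phase $\alpha t^2$ is killed by the third derivative; this yields a saving of order $X^{(3\gamma-2)/\text{const}}$ against the length loss $X^{1-\gamma}$. That tradeoff should be the source of the denominator $3\gamma-2$.

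\textbf{Step 3: large-values interpolation.} For $\delta\in(0,1)$ let $\mathcal R_\delta=\{\alpha:|\widehat\psi(\alpha)|>\delta NH\}$. By the Bourgain/Green argument, which combines duality, Cauchy--Schwarz, and $\widehat\tau$ split into its major arc part (controlled as in Step 2) and its minor arc part, one gets for $\theta\in\mathcal R_\delta$ the estimate $\operatorname{meas}(\mathcal R_\delta)\ll \delta^{-4-\epsilon}N^{-1}$ when $\delta$ exceeds the threshold $N^{-\kappa'}$ determined by Step 2. Here the $q^{-1/2}$ decay makes the major arc term summable, exactly as in Chow's squares-of-primes restriction, which treats the case $\gamma=1$. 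For smaller $\delta$ I will use Step 1 to get $\operatorname{meas}(\mathcal R_\delta)\ll \delta^{-4}N^{-1}N^{c(1-\gamma)}$.

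\textbf{Step 4: assembling the bound.} Finally, I will bound $\int|\widehat\psi|^\nu$ by dyadic summation over $\delta$: $\sum_\delta (\delta NH)^\nu\operatorname{meas}(\mathcal R_\delta)$. This converges to $N^{\nu-1}H^\nu$ in the range $\delta\ge N^{-\kappa'}$ once $\nu>4$. In the small-$\delta$ range it converges once $N^{-\kappa'(\nu-4)}N^{c(1-\gamma)}\le1$, i.e. $\nu>4+c(1-\gamma)/\kappa'$. Matching the constants from the van der Corput step should give $\kappa'=(3\gamma-2)/26$ relative to $c=1$, yielding the stated condition $\nu>4+\frac{26(1-\gamma)}{3\gamma-2}$.
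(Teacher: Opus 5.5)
Your skeleton is essentially the argument the paper invokes: a large-spectrum/$\varepsilon$-removal step, a uniform approximation of $\widehat\tau$ by the smooth squares model, and a fourth-moment bound for small $\delta$. The paper proves nothing itself. It runs the restriction argument of Ren--Sun--Zhang--Zhang (Lemma 4.1), replacing their exponential-sum input by Lemma 10 of Akbal--G\"{u}lo\u{g}lu. In that scheme the threshold is $\nu>4+L/\kappa'$, where $\int_{\mathbb{T}}|\widehat\psi|^4\ll N^{3+L+\varepsilon}$, and the uniform error $N^{1-2\kappa'}$ between $\widehat\tau$ and its model fixes the admissible range $\delta\geqslant N^{-\kappa'}$. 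The stated $\frac{26(1-\gamma)}{3\gamma-2}$ is what $L=1-\gamma$ and $\kappa'=\frac{3\gamma-2}{26}$ give, and your plan secures neither.

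First, Step 1 as written drops $t_i\in\mathscr{N}_\gamma$ in all four variables. That gives the loss $X^{4(1-\gamma)}=N^{2(1-\gamma)}$, i.e. $c=2$, so Step 4 would only give $\nu>4+\frac{52(1-\gamma)}{3\gamma-2}$. This is below $5$ only for $\gamma>\frac{54}{55}$, and your closing appeal to $c=1$ contradicts your own Step 1. This is easily repaired: keep $t_1,t_2\in\mathscr{N}_\gamma$ and count $(t_3,t_4)$ via $r_2(m)\ll m^{\varepsilon}$. This gives $X^{4(2-\gamma)}X^{2\gamma+\varepsilon}\ll N^{3+(1-\gamma)+\varepsilon}$, and the diagonal solutions show this cannot be improved for $\psi=\tau$.

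The real gap is Step 2, which carries the whole content of the lemma and which you leave at ``should give''. Carry out what you propose: Vaaler with $H_0=X^{1-\gamma+\eta}$, partial summation against $e(-h((t+1)^\gamma-t^\gamma))-1\ll hX^{\gamma-1}$, then the third-derivative test $\sum_{t\sim X}e(-ht^\gamma+\alpha t^2)\ll X\lambda_3^{1/6}+X^{1/2}\lambda_3^{-1/6}$ with $\lambda_3\asymp hX^{\gamma-3}$. After summing over $h\leqslant H_0$, the first term contributes $X^{2/3+7\eta/6}$. Requiring this to be $\leqslant X^{\gamma-\eta}$ is exactly $\eta\leqslant\frac{2(3\gamma-2)}{13}$, i.e. $\kappa'=\frac{3\gamma-2}{26}$; that is where $3\gamma-2$ and $26$ come from.

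But $H_0<X^{3/2-\gamma}$, so the second term dominates for every $h\leqslant H_0$. It contributes $X^{5/6+5\eta/6}$ and forces $\eta\leqslant\frac{6\gamma-5}{11}$. That leads only to $\nu>4+\frac{44(1-\gamma)}{6\gamma-5}$, which is about $5.9$ at $\gamma=\frac{28}{29}$ and below $5$ only for $\gamma>\frac{49}{50}$.

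So the tool you name does not produce the claimed exponent. What is needed is a bound, uniform in $\alpha$, of the shape $\sum_{t\leqslant X}(\gamma^{-1}t^{1-\gamma}\mathds{1}_{\mathscr{N}_\gamma}(t)-1)e(\alpha t^2)\ll X^{1-\frac{2(3\gamma-2)}{13}+\varepsilon}$, in which the small-$h$ secondary terms are genuinely controlled. That is the role of the estimate the paper imports from Akbal--G\"{u}lo\u{g}lu rather than proves. Unless you cite it or supply such an argument, Steps 3--4 do not reach the stated range of $\nu$.
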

\begin{proof}
One can follow the arguments exactly the same as those in \cite[Lemma 4.1]{Ren-Sun-Zhang-Zhang-2025} with
\cite[Eq. (1.7)]{Ren-Sun-Zhang-Zhang-2025} replaced by \cite[Lemma 10]{Akbal-Guloglu-2016}. Therefore, we omit the details herein.	
\end{proof}

It is easy to see that $0<\frac{26(1-\gamma)}{3\gamma-2}<1$ provided that $\gamma\in(\frac{28}{29},1)$, and then
we have the following result.
\begin{lemma}\label{Restriction estimate 2}
Suppose that $\gamma\in(\frac{28}{29},1)$. Then, for any $b\in[W]$ with $b\in Z(W)$, there exists $q$ subject to
$4<q<5$ such that
\begin{equation*}
\int_{\mathbb{T}}\big|\widehat{f_{b,\gamma}}(\alpha)\big|^{q}\mathrm{d}\alpha\ll N^{q-1}.
\end{equation*}
\end{lemma}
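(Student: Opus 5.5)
The natural route is to deduce Lemma \ref{Restriction estimate 2} from Lemma \ref{Restriction estimate 1}. We take $\psi=f_{b,\gamma}$ and choose $q$ strictly between $4+\frac{26(1-\gamma)}{3\gamma-2}$ and $5$. This is possible because $\gamma\in(\frac{28}{29},1)$ gives $0<\frac{26(1-\gamma)}{3\gamma-2}<1$, as noted just before the statement. For instance, $q=\frac{1}{2}\big(9+\frac{26(1-\gamma)}{3\gamma-2}\big)$ works.

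First we check the domination hypothesis $|f_{b,\gamma}|\leqslant c\,\tau$ for a suitable normalization. If $f_{b,\gamma}(n)\neq0$, then $Wn+b=p^2$ with $p\in\mathscr{N}_\gamma\cap\mathcal{P}$. Hence $t=p$ satisfies $Wn+b=t^2$, $t\in\mathscr{N}_\gamma$, and
\begin{equation*}
f_{b,\gamma}(n)=\frac{\varphi(W)}{W}\log p\cdot\frac{2}{\sigma_W(b)}\gamma^{-1}p^{2-\gamma}
\leqslant \frac{\varphi(W)}{W}\,\log\sqrt{WN+b}\;\tau(n).
\end{equation*}
So $f_{b,\gamma}=\frac{\varphi(W)}{W}H'\psi$ with $|\psi|\leqslant\tau$, where $H'=\log\sqrt{WN+b}\asymp H$. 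The function $\tau$ is well defined since $t=\sqrt{Wn+b}$ is determined by $n$.

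Lemma \ref{Restriction estimate 1} then gives
\begin{equation*}
\int_{\mathbb{T}}|\widehat{f_{b,\gamma}}(\alpha)|^q\,\mathrm{d}\alpha\ll\Big(\frac{\varphi(W)}{W}\Big)^q H^{q}\,N^{q-1}H^{q}.
\end{equation*}
This is where the real difficulty lies. The naive application yields $N^{q-1}(\log N)^{2q}(\varphi(W)/W)^q$, and since $\varphi(W)/W\gg 1/\log w$ only decays like $1/\log\log\log\log n_0$, it cannot absorb the logarithms.

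To remove the extra logarithms, I would use the standard epsilon-removal trick (Bourgain/Green). The plan is to prove the restriction estimate for a slightly larger exponent $q_1\in(4+\frac{26(1-\gamma)}{3\gamma-2},q)$ with a logarithmic loss. Then I would get the loss-free bound at exponent $q$ via the large-spectrum argument. This uses the large sieve for primes together with the pseudorandomness from Lemma \ref{pseudorandomness}, which gives $\|\widehat{f_{b,\gamma}}\|_\infty\leqslant (1+o(1))N$, and the fact that $f_{b,\gamma}$ has mean $\asymp1$.

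Concretely, I would proceed in three steps.
\begin{enumerate}
\item Bound the measure of the large spectrum $\{\alpha:|\widehat{f_{b,\gamma}}(\alpha)|\geqslant\delta N\}$ by $\ll_\epsilon \delta^{-q_1-\epsilon}N^{-1}$. For $\delta\geqslant(\log N)^{-C}$ this follows from a major-arc/large-sieve analysis as in Chow \cite{Chow-2018} and Salmensuu \cite{Salmensuu-2020}. For smaller $\delta$ it follows from the lossy $L^{q_1}$ bound via Chebyshev, since the logarithmic loss is then absorbed by a tiny power of $\delta$.
\item Use dyadic decomposition to sum $\int|\widehat f|^q$ over the level sets.
\item Conclude, since $q>q_1$ makes the dyadic series converge, giving $\ll N^{q-1}$.
\end{enumerate}

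Alternatively, and perhaps more simply, the proof in \cite{Ren-Sun-Zhang-Zhang-2025} may already carry weights with $\log p$ normalized so that $H$ cancels. In that case one only needs the domination step and the choice of $q$. I would first check which form the cited lemma truly delivers before invoking epsilon-removal.
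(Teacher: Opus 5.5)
Your main line is essentially the paper's proof. The paper obtains Lemma \ref{Restriction estimate 2} by rerunning the argument of \cite[Proposition 5.1]{Ren-Zhang-Zhang-2024} with Lemma \ref{Restriction estimate 1} as the lossy input; this matches your domination step followed by removal of the logarithmic loss at an exponent strictly between $4+\frac{26(1-\gamma)}{3\gamma-2}$ and $q$. Your fallback hope that $H$ simply cancels is not available from Lemma \ref{Restriction estimate 1} as stated, so the upgrade step is genuinely needed. In the range $\delta\geqslant(\log N)^{-C}$ of your large-spectrum step, import the prime major-arc information by comparing $\widehat{f_{b,\gamma}}$ with $\widehat{\lambda_b}$ (from the proof of Lemma \ref{pseudorandomness}) via Lemma \ref{trans}. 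Its $O(N^{1-\varepsilon})$ discrepancy is negligible against $\delta N$, whereas the $o(N)$ error in Lemma \ref{pseudorandomness} is too weak at that scale.
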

\begin{proof}
The conclusion follows directly by repeating the arguments in \cite[Proposition 5.1]{Ren-Zhang-Zhang-2024} with
\cite[Lemma 5.2]{Ren-Zhang-Zhang-2024} replaced by Lemma \ref{Restriction estimate 1}. Hence, we omit the details
herein.	
\end{proof}

\section{Mean value estimate}
Suppose that $P\subseteq[N]$ is an arithmetic progression with $|P|\geqslant N/w$, then there exist integers
$a,q$ such that $1\leqslant q\leqslant w$, $a\in[N]$ and $P=\{a+qt:1\leqslant t\leqslant|P|\}$. In this section, we shall establish the mean value result of $f_{b,\gamma}$ over the set $P$ for $\gamma\in(\frac{75}{82},1)$ and
$b\in[W]\cap Z(W)$, i.e., (\ref{mean-value}).
	
Set $L:=|P|,W':=Wq$ and $b':=Wa+b$. Since $q\leqslant w$ and $(b,W)=1$, we have
\begin{equation*}
1\leqslant(b',W')=(Wa+b,Wq)\leqslant(Wa+b,W)\cdot(Wa+b,q)=1.
\end{equation*}
By \cite[Proposition 4.2.1 and Proposition 4.2.2]{Ireland-Rosen-book} and the fact that $q\leqslant w$, we have
\begin{equation}\label{two sigma}
\sigma_{W}(b)=\sigma_{W'}(b').
\end{equation}
By the definition of $ f_{b,\gamma} $, we have
\begin{align*}
         \sum_{n\in P}f_{b,\gamma}(n)
= & \,\, \frac{2\varphi(W)}{W\sigma_{W}(b)}\gamma^{-1}\sum_{\substack{n\in P\\Wn+b=p^2\\p\in\mathscr{N}_\gamma}}
         p^{2-\gamma}(\log p)=\frac{2\varphi(W)}{W\sigma_W(b)}\gamma^{-1}
         \sum_{\substack{1\leqslant t \leqslant L \\W't+b'=p^2\\p\in \mathscr{N}_\gamma}}p^{2-\gamma}(\log p)
               \nonumber \\
= & \,\, \frac{2\varphi(W)}{W\sigma_{W}(b)}\gamma^{-1}\sum_{\substack{\sqrt{b'}<p\leqslant \sqrt{W'L+b'}\\ p^2\equiv b' \!\!\!\!\!\pmod{W'}\\ p\in\mathscr{N}_\gamma}}p^{2-\gamma}(\log p).
\end{align*}
It is easy to see that
\begin{equation*}
\frac{1}{W^{\prime}}\sum_{1\leqslant j\leqslant W'}e\bigg(j\cdot\frac{p^2-b'}{W'}\bigg)=
\begin{cases}
1, & \textrm{if\,\,$p^2 \equiv b' \!\!\!\!\!\pmod{W'}$}, \\
0, & \textrm{otherwise},
\end{cases}
\end{equation*}
which combined with Lemma \ref{trans} yields that
\begin{align}\label{f_b,gamma-mean}
		& \,\,  \sum_{n\in P}f_{b,\gamma}(n)
 = \frac{2\varphi(W)}{W\sigma_{W}(b)}\gamma^{-1}\sum_{\substack{\sqrt{b'}<p\leqslant \sqrt{W'L+b'}\\
          p\in\mathscr{N}_\gamma}}p^{2-\gamma}(\log p)\frac{1}{W'}\sum_{1\leqslant j\leqslant W'}
          e\bigg(j\cdot\frac{p^2-b'}{W'}\bigg)
                 \nonumber \\
 = & \,\, \frac{2\varphi(W)}{W\sigma_{W}(b)}\frac{1}{W'}\sum_{1\leqslant j\leqslant W'}
          e\bigg(-j\cdot\frac{b'}{W'}\bigg)\sum_{\substack{\sqrt{b'}<p\leqslant\sqrt{W'L+b'}\\
          p\in \mathscr{N}_\gamma}}\frac{1}{\gamma}p^{2-\gamma}(\log p)e\bigg(p^2\cdot\frac{j}{W'}\bigg)
                 \nonumber \\
 = & \,\, \frac{2\varphi(W)}{W\sigma_{W}(b)}\frac{1}{W'}\sum_{1\leqslant j\leqslant W'}
          e\bigg(-j\cdot\frac{b'}{W'}\bigg)\bigg(\sum_{\sqrt{b'}<p\leqslant\sqrt{W'L+b'}}p(\log p)
          e\bigg(p^2\cdot\frac{j}{W'}\bigg) +O(n_{0}^{1-\varepsilon}) \bigg)
                 \nonumber \\
 = & \,\, \frac{\varphi(W)}{W\sigma_{W}(b)}\sum_{\substack{\sqrt{b'}<p\leqslant\sqrt{W'L+b'}\\
          p^2\equiv b'\!\!\!\!\!\pmod{W'}}}2p\log p+O(n_{0}^{1-\varepsilon})
                 \nonumber \\
 = & \,\, \frac{\varphi(W)}{W\sigma_{W}(b)}\sum_{\substack{z\in[W']\\ z^2\equiv b' \!\!\!\!\!\pmod{W'}}}
          \sum_{\substack{\sqrt{b'}<p\leqslant\sqrt{W'L+b'}\\ p\equiv z \!\!\!\!\!\pmod{W'}}}
          2p\log p+O(n_{0}^{1-\varepsilon}),
		\end{align}
where we use the following estimate
\begin{equation*}
\sqrt{W'L+b'}=\sqrt{W(qL+a)+b}\leqslant \sqrt{WN+b}\ll n_{0}^{1/2}.
\end{equation*}
Next, we shall compute the inner sum on the right--hand side of (\ref{f_b,gamma-mean}). For this objective, we need to use the Siegel--Walfisz theorem.
\begin{lemma}[Siegel--Walfisz]\label{Siegel¨CWalfisz}
For $(\ell,q)=1$, let $\pi(x;q,\ell)$ denote the number of distinct primes not greater than $x$ which are in the arithmetic progression $\ell+kq$. Then, for any $A>0$, there holds
\begin{equation*}
\pi(x;q,\ell)=\frac{1}{\varphi(q)}\operatorname{li}(x)+O\big(xe^{-c\sqrt{\log x}}\big),
\end{equation*}
provided that $q\ll(\log x)^A$, where $\operatorname{li}(x)=\int_{2}^{x}\frac{\mathrm{d}t}{\log t}$, and the constants included in the $O$--symbol are independent of $q$.
\end{lemma}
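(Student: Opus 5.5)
The Siegel--Walfisz theorem is classical, and I would cite it rather than reprove it from scratch. Still, the standard route is as follows. First handle the prime-power-weighted version $\psi(x;q,\ell)=\sum_{n\leqslant x,\,n\equiv\ell \pmod q}\Lambda(n)$ and show
\begin{equation*}
\psi(x;q,\ell)=\frac{x}{\varphi(q)}+O\big(xe^{-c\sqrt{\log x}}\big)
\end{equation*}
uniformly for $q\ll(\log x)^A$. Then pass to $\pi(x;q,\ell)$ by partial summation. Prime powers $p^k$ with $k\geqslant2$ contribute only $O(\sqrt{x}\log x)$, which is absorbed in the error term, and the main term $x/\varphi(q)$ becomes $\operatorname{li}(x)/\varphi(q)$.

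For the $\psi$ statement, expand by orthogonality of Dirichlet characters:
\begin{equation*}
\psi(x;q,\ell)=\frac{1}{\varphi(q)}\sum_{\chi\bmod q}\overline{\chi}(\ell)\psi(x,\chi).
\end{equation*}
The principal character gives $x+O(xe^{-c\sqrt{\log x}})$ by the prime number theorem with de la Vallée Poussin error term. The characters are imprimitive only at the cost of $O((\log qx)^2)$. For each nonprincipal $\chi$, apply the truncated explicit formula
\begin{equation*}
\psi(x,\chi)=-\sum_{|\gamma|\leqslant T}\frac{x^{\rho}}{\rho}-\delta_\chi\frac{x^{\beta_1}}{\beta_1}+O\bigg(\frac{x(\log qxT)^2}{T}\bigg),
\end{equation*}
together with the classical zero-free region $\sigma>1-c/\log(q(|t|+2))$. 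This region holds for all zeros except possibly one real exceptional zero $\beta_1$, for at most one real character modulo $q$. Choosing $T=e^{\sqrt{\log x}}$ and using $q\ll(\log x)^A$, the nonexceptional zeros contribute $O(xe^{-c'\sqrt{\log x}})$.

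The main obstacle is the possible exceptional zero, and here I would invoke Siegel's theorem. For every $\epsilon>0$ there is an ineffective $c(\epsilon)$ with $\beta_1<1-c(\epsilon)q^{-\epsilon}$. Taking $\epsilon=1/(2A)$ gives $q^{\epsilon}\ll(\log x)^{1/2}$. Hence
\begin{equation*}
x^{\beta_1}\leqslant x\exp\big(-c(\epsilon)\log x/q^{\epsilon}\big)\ll x e^{-c''\sqrt{\log x}}.
\end{equation*}
Consequently, the constant $c$ in the final error term is ineffective and depends on $A$. It is nevertheless independent of $q$ and $\ell$, which is exactly what is needed when the lemma is applied with modulus $W'=Wq\ll(\log n_0)^{A}$ in (\ref{f_b,gamma-mean}).
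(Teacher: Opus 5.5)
Your proposal is correct and takes essentially the same approach as the paper: the paper does not prove this lemma but cites it as Lemma 7.14 of Hua's book, and you likewise treat it as the classical Siegel--Walfisz theorem. Your sketch is the standard argument behind that citation: the explicit formula with the de la Vall\'ee Poussin zero-free region and $T=e^{\sqrt{\log x}}$, Siegel's bound $\beta_1<1-c(\epsilon)q^{-\epsilon}$ with $\epsilon=1/(2A)$ for the exceptional zero, and partial summation from $\psi(x;q,\ell)$ to $\pi(x;q,\ell)$. Your remark that $c$ is ineffective and depends on $A$, but not on $q$ or $\ell$, is accurate.
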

\begin{proof}	
See Lemma 7.14 of Hua \cite{Hua-book}.
\end{proof}
Since $q\leqslant w$, then it follows that $W'\leqslant\log\sqrt{WN+W}$ for all sufficiently large $n_{0}$. Let
$X_1=\lfloor\sqrt{b'}\rfloor, X_2=\lfloor\sqrt{W'L+b'}\rfloor$ and $M=\sqrt{WN+W}$. By setting $g(n)=2n\log n$,
we get
\begin{align*}
         \sum_{\substack{X_1<p\leqslant X_2\\ p\equiv z \!\!\!\!\!\pmod{W'}}}2p\log p
= & \,\, \sum_{X_1<n\leqslant X_2}\big(\pi(n;W',z)-\pi(n-1;W',z)\big)g(n)
               \nonumber \\
= & \,\, \pi(X_2;W',z)g(X_2)-\pi(X_1; W',z)g(X_1+1)\\
			&\ \ \ \ +\sum_{X_1<n\leqslant X_2-1}\pi(n; W',z)(g(n)-g(n+1)).
\end{align*}
It follows from  mean value theorem that $g(n)-g(n+1)\ll\log X_{2}\leqslant\log M$, which combined with Lemma \ref{Siegel¨CWalfisz} yields that
\begin{align*}
         \sum_{\substack{X_1<p\leqslant X_2\\ p\equiv z \!\!\!\!\!\pmod{W'}}}2p\log p
= & \,\, \bigg(\frac{\operatorname{li}(X_2)}{\varphi(W')}+O\big(Me^{-c_1\sqrt{\log M}}\big)\bigg)g(X_2)
                   \nonumber \\
  & \,\, -\bigg(\frac{\operatorname{li}(X_1)}{\varphi(W')}+O\big(Me^{-c_1\sqrt{\log M}}\big)\bigg)g(X_{1}+1)
                   \nonumber \\
  & \,\, +\sum_{X_1<n\leqslant X_2-1}\bigg(\frac{\operatorname{li}(n)}{\varphi(W')}
         +O\big(Me^{-c_1\sqrt{\log M}}\big)\bigg)(g(n)-g(n+1))
                   \nonumber \\
= & \,\, \frac{1}{\varphi(W')}\bigg(\operatorname{li}(X_2)g(X_2)-\operatorname{li}(X_1)g(X_1+1)
                   \nonumber \\
  & \,\, +\sum_{X_1<n\leqslant X_2-1}\operatorname{li}(n)(g(n)-g(n+1))\bigg)+O\big(M^2e^{-c_2\sqrt{\log M}}\big)
                   \nonumber \\
= & \,\, \frac{1}{\varphi(W')}\sum_{X_1<n\leqslant X_2}g(n)\int_{n-1}^{n}\frac{\mathrm{d}t}{\log t}
         +O\big(M^{2}e^{-c_2\sqrt{\log M}}\big).
\end{align*}
By noting that $\log t=\log n+O(1/n)$ for $ n-1\leq t\leq n $, we derive that
\begin{align*}
         \sum_{\substack{X_1<p\leqslant X_2\\ p\equiv z \!\!\!\!\!\pmod{W'}}}2p\log p
= & \,\, \frac{1}{\varphi(W')}\sum_{X_1<n\leqslant X_2}2n+O\big(M^2e^{-c_2\sqrt{\log M}}\big)
               \nonumber \\
= & \,\, \frac{W'L}{\varphi(W')}+O\big(M^{2}e^{-c_2\sqrt{\log M}}\big).
\end{align*}
Since $q\leqslant w$, we get
\begin{equation}\label{equal}
\frac{W'}{\varphi(W')}=\frac{Wq}{\varphi(Wq)}=\frac{W}{\varphi(W)}.
\end{equation}
By (\ref{equal}) and $L\geqslant N/w$, we have
\begin{equation}\label{the error term}
\frac{M^2e^{-c_2\sqrt{\log M}}}{W'L/\varphi(W')}\leqslant
\frac{M^2e^{-c_2\sqrt{\log M}}}{W'N/(w\varphi(W'))}=
\frac{M^2e^{-c_2\sqrt{\log M}}w\varphi(W')}{W'N}=o(1).
\end{equation}
By (\ref{equal}) and (\ref{the error term}), we deduce that
\begin{equation}\label{inner sum}
\sum_{\substack{X_1<p\leqslant X_2\\ p\equiv z \!\!\!\!\!\pmod{W'}}}2p\log p=\dfrac{WL}{\varphi(W)}(1+o(1)).
\end{equation}
	Combining (\ref{two sigma}), (\ref{f_b,gamma-mean}) and (\ref{inner sum}) , we derive that
	\begin{equation}\label{mean-value}
		\begin{aligned}
			\dfrac{1}{|P|}\sum_{n\in P}f_{b,\gamma}(n)=1+o(1).
		\end{aligned}
	\end{equation}

\section{Proof of Theorem \ref{Theorem-1}}

In this section, we shall use Proposition \ref{transference principle} to prove Theorem \ref{Theorem-1}. Before presenting the proof, we need the following lemma about local condition of Waring--Goldbach problem.
\begin{lemma}\label{local-solution}
		For any integer \( n \equiv 5 \pmod{24} \), there exist integers $b_i$ with $1\leqslant b_i\leqslant W$ and $(W,b_i)=1$ for $i=1,\dots,5$, such that each $b_i$ is a quadratic residue modulo $W$ and
\begin{equation*}		
n\equiv b_1+b_2+b_3+b_4+b_5\!\!\!\pmod{W}.
\end{equation*}
\end{lemma}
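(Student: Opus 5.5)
The plan is to use the Chinese remainder theorem. Since $W=8\prod_{2<p\leqslant w}p$ is $8$ times a squarefree odd number, $\mathbb{Z}_W\cong\mathbb{Z}_8\times\prod_{2<p\leqslant w}\mathbb{Z}_p$. It therefore suffices to find, for each prime power factor $m\in\{8\}\cup\{p:3\leqslant p\leqslant w\}$, residues $b_1^{(m)},\dots,b_5^{(m)}$ that are squares of units modulo $m$ with $n\equiv b_1^{(m)}+\dots+b_5^{(m)}\pmod m$. We then glue them together. A residue $b_i$ modulo $W$ whose reduction modulo each factor is a square of a unit is itself the square of a unit: glue the square roots $t^{(m)}$ by the Chinese remainder theorem to get $t$ with $(t,W)=1$ and $t^2\equiv b_i$. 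In particular $b_i\in Z(W)$ and $(b_i,W)=1$. We take $b_i$ to be the representative in $[W]$.

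At the modulus $8$, every odd square is $\equiv1\pmod 8$. Since $n\equiv5\pmod{24}$ gives $n\equiv5\pmod8$, the choice $b_i^{(8)}=1$ for all $i$ works. At $p=3$, unit squares are $\equiv1\pmod3$, and $n\equiv5\equiv2\pmod3$ equals $1+1+1+1+1\equiv2$, so we take all $b_i^{(3)}=1$.

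The only genuine step is a prime $p\geqslant5$, $p\leqslant w$. We must show that every residue $c\pmod p$ is a sum of five nonzero squares modulo $p$. I would show something stronger: every residue is a sum of two nonzero squares, except possibly $0$, and then distribute. The route is the standard count. The number of solutions of $x^2+y^2=c$ with $c\neq0$ is $p-(\frac{-1}{p})\geqslant p-1$. Among these, at most $4$ have $xy=0$. So for $p\geqslant7$ (and one checks $p=5$ directly) every nonzero $c$ is a sum of two nonzero squares.

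Now write $n\equiv c$. Choose $b_1=b_2=b_3=1$, so the remaining target is $c-3$. If $c-3\not\equiv0$, write $c-3$ as a sum of two nonzero squares. If $c\equiv3$, instead take $b_1=b_2=1$, $b_3=4$ (nonzero since $p\ne2$), leaving $c-6\equiv-3\not\equiv0$ for $p\geqslant5$, which is again a sum of two nonzero squares. For $p=5$ the nonzero squares are $\{1,4\}$ and sums of two of them give $\{2,0,3\}$. I would handle this by a short direct check: sums of five elements of $\{1,4\}$ realize every residue modulo $5$, since $k$ fours and $5-k$ ones give $5+3k\equiv3k$ for $k=0,\dots,4$, hitting all residues.

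The main obstacle is only this local solvability at small primes, including the careful treatment of $p=5$ and of the case where the intermediate target vanishes. Everything else is Chinese remainder bookkeeping, so the lemma follows.
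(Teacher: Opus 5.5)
Your proof is correct and complete. The paper does not prove this lemma itself; it only cites Lemma~4.3 of Bauer \cite{Bauer-2020}. So your argument is a self-contained substitute rather than a different version of a proof given in the text.

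Your Chinese remainder decomposition shows exactly where the hypothesis $n\equiv5\pmod{24}$ is used. Modulo $8$ and modulo $3$ the only square of a unit is $1$, which forces $b_i\equiv1$ and hence $n\equiv5$ at those moduli. Every prime $5\leqslant p\leqslant w$ imposes no condition. Your gluing step produces $b_i$ that are squares of units modulo $W$, i.e.\ $b_i\in Z(W)$, which is what the definition of $f_{b,\gamma}$ requires. Your separate treatment of $p=5$ is genuinely needed, since $1$ and $4$ are not sums of two nonzero squares modulo $5$.

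The citation is shorter, but it leaves the reader to check that Bauer's normalisation of $W$ and of ``quadratic residue'' matches $W=8\prod_{2<p\leqslant w}p$ and $Z(W)$; your version avoids this check.

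A small simplification is available at the primes $p\geqslant5$. The Cauchy--Davenport theorem lets you skip both the point count for $x^2+y^2=c$ and the special case $p=5$. The set $Q$ of nonzero squares has $(p-1)/2$ elements, and $5\cdot\frac{p-1}{2}-4\geqslant p$ for all $p\geqslant5$, so $Q+Q+Q+Q+Q=\mathbb{Z}_p$ directly.
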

\begin{proof}
See Lemma 4.3 of Bauer \cite{Bauer-2020}.
\end{proof}
	
\noindent
\textbf{Proof of Theorem \ref{Theorem-1}}. Let $n_{0}$ be a sufficiently large integer which satisfies
$n\equiv5\pmod{24}$. By Lemma \ref{local-solution}, we know that there exist integers $b_1,\dots,b_5\in[W]$
subject to
\begin{equation}\label{n_o-congru}
n_{0}\equiv b_1+b_2+b_3+b_4+b_5\!\!\!\pmod{W}
\end{equation}
and $b_{i}\in Z(W)$ for all $1\leqslant i \leqslant5$. It follows from Lemma \ref{Restriction estimate 2} that,
for $i=1,2,3,4,5$, the following estimates
\begin{equation*}
\big\|\widehat{f_{b_i,\gamma_i}}\big\|_{q}\leqslant KN^{1-1/q}
\end{equation*}
holds for some $q\in(4,5)$ and $K\geqslant1$. By (\ref{mean-value}), one has
\begin{equation*}
\mathbb{E}_{n\in P}f_{b_i,\gamma_i}(n)\geqslant 1/5 + \varepsilon
\end{equation*}
for any $\varepsilon\in(0,4/5)$. Moreover, by Lemma \ref{pseudorandomness}, it is easy to see that, for each arithmetic progression $P\subseteq[N]$ with $|P|\geqslant\eta N$, there holds
\begin{equation*}
\big\|\widehat{f_{b_i,\gamma_i}}-\widehat{\mathds{1}_{[N]}}\big\|_{\infty}\leqslant\eta N,
\end{equation*}
where $\eta>0$ is a sufficiently small constant which will be determined later.
Based on the above arguments, it follows from Proposition \ref{transference principle} that
\begin{equation*}
f_{b_1,\gamma_1}\ast\cdots\ast f_{b_5,\gamma_5}(n)\geqslant
\big(c(\varepsilon)-\eta\cdot C(\varepsilon,q,K)\big)N^{4}
\end{equation*}
holds for all $n\in[N/2,N]$, where $C(\varepsilon,q,K)>0 $ is a constant depending only on $\varepsilon,q$ and
$K$. let $\eta$ be sufficiently small such that
\begin{equation*}
c(\varepsilon)-\eta\cdot C(\varepsilon,q,K)>0,
\end{equation*}
and thus
\begin{equation*}
f_{b_1,\gamma_1}\ast\cdots\ast f_{b_5,\gamma_5}(n)>0
\end{equation*}
for all $n\in[N/2,N]$, which means
\begin{equation*}
\sum_{n=n_1+n_2+n_3+n_4+n_5}f_{b_1,\gamma_1}(n_1)f_{b_2,\gamma_2}(n_2)f_{b_3,\gamma_3}(n_3)f_{b_4,\gamma_4}(n_4)
f_{b_5,\gamma_5}(n_5)>0
\end{equation*}
for all $n\in[N/2,N]$. In other words, for all $n\in[N/2,N]$, there exist integers $n_1,n_2,n_3,n_4,n_5$
such that $n=n_1+n_2+n_3+n_4+n_5$ with $Wn_i+b_i=p_i^2$, where $p_i\in\mathscr{N}_{\gamma_i}\cap\mathcal{P}$. Therefore, we deduce that
\begin{equation*}
Wn+b_1+\dots+b_5=(Wn_1+b_1)+\dots+(Wn_5+b_5)=p_1^2+\dots+p_5^2\in P_1+\dots+P_5,
\end{equation*}
where $P_i:=\{p^2:p\in\mathscr{N}_{\gamma_i}\cap\mathcal{P}\} $.
Let $n'=(n_0-b_1-\dots-b_5)/W$. It follows from (\ref{n_o-congru}) and $N=\lfloor n_0/W\rfloor+1$ that
 $n'\in\mathbb{Z}^+$ and $N/2<n'<N$. Hence, we deduce that
\begin{equation*}
  n_0=Wn'+b_1+\dots+b_5\in P_1+\dots+P_5.
\end{equation*}
This completes the proof of Theorem \ref{Theorem-1}.

\section*{Acknowledgement}

The authors would like to appreciate the referee for his/her patience in refereeing this paper.
This work is supported by Beijing Natural Science Foundation (Grant No. 1242003), and
the National Natural Science Foundation of China (Grant Nos. 12471009, 12301006, 11901566, 12001047).


\begin{thebibliography}{99}

\bibitem{Akbal-Guloglu-2016}Y. Akbal, A. M. G\"{u}lo\u{g}lu, \textit{Waring's problem with Piatetski--Shapiro
                     numbers}, Mathematika, \textbf{62} (2016), no. 2, 524--550.

\bibitem{Balog-Friedlander-1992}A. Balog, J. Friedlander, \textit{A hybrid of theorems of Vinogradov and
                     Piatetski--Shapiro}, Pacific J. Math., \textbf{156} (1992), no. 1, 45--62.

\bibitem{Bauer-2020}C. Bauer, \textit{An L--function free proof of Hua's theorem on sums of five prime squares},
                    Studia Sci. Math. Hungar, \textbf{57} (2020), no. 1, 1--39.

\bibitem{Chow-2018}S. Chow, \textit{Roth--Waring--Goldbach},
                     Int. Math. Res. Not. IMRN, \textbf{2018} (2018), no. 8, 2341--2374.

\bibitem{Cui-2004}Z. Cui, \textit{Hua's theorem with the primes in Shapiro prime sets},
                             Acta Math. Hungar.,  \textbf{104} (2004), no. 4, 323--329.

\bibitem{Gao-2025}M. Gao, \textit{A density version of Waring--Goldbach problem},
                   Int. J. Number Theory, \textbf{21} (2025), no. 6, 1417--1436.

\bibitem{Heath-Brown-1983}D. R. Heath--Brown, \textit{The Pjatecki\u{\i}--\u{S}apiro prime number theorem},
                                  J. Number Theory, \textbf{16} (1983), no. 2, 242--266.

\bibitem{Hua-1938}L. K. Hua, \textit{Some results in the additive prime--number theory},
                          Quart. J. Math. Oxford Ser. (2), \textbf{9} (1938), no. 1, 68--80.

\bibitem{Hua-book}L. K. Hua, \textit{Additive Theory of Prime Numbers}, American Mathematical Society,
                       Providence, Rhode Island, 1965.

\bibitem{Ireland-Rosen-book}K. Ireland, M. Rosen, \textit{A classical introduction to modern number theory},
                  Graduate Texts in Mathematics 84, Springer--Verlag, New York, 1990.

\bibitem{Jia-1995}C.-H. Jia, \textit{On the Piatetski--Shapiro--Vinogradov theorem}, Acta Arith.,
                                            \textbf{73} (1995), no. 1, 1--28.

\bibitem{Kolesnik-1967}G. Kolesnik, \textit{The distribution of primes in sequences of the form $[n^c]$},
                          Mat. Zametkki, \textbf{2} (1967), 117--128.

\bibitem{Kolesnik-1985}G. Kolesnik, \textit{Primes of the form $[n^c]$},
                          Pacific J. Math., \textbf{118} (1985), no. 2, 437--447.

\bibitem{Kumchev-1997}A. Kumchev, \textit{On the Piatetski--Shapiro--Vinogradov theorem},
                                          J. Th\'{e}or. Nombres Bordeaux, \textbf{9} (1997), no. 1, 11--23.

\bibitem{Leitmann-1975}D. Leitmann, D. Wolke, \textit{Primzahlen der Gestalt $[f(n)]$},
                           Math. Z., \textbf{145} (1975), no. 1, 81--92.

\bibitem{Leitmann-1980}D. Leitmann, \textit{Absch\"{a}tzung trigonometrischer Summen},
                            J. Reine Angew. Math., \textbf{317} (1980), 209--219.

\bibitem{Li-Zhang-2018}J. Li, M. Zhang, \textit{Hua's theorem with the primes in Piatetski--Shapiro prime sets},
                            Int. J. Number Theory, \textbf{14} (2018), no. 1, 193--220.

\bibitem{Liu-Rivat-1992}H. Q. Liu, J. Rivat, \textit{On the Pjatecki\u{\i}--\u{S}apiro prime number theorem},
                           Bull. London Math. Soc., \textbf{24} (1992), no. 2, 143--147.

\bibitem{Piatetski-Shapiro-1953}I. I. Pyatecki\u{\i}--\u{S}apiro, \textit{On the distribution of prime numbers in
                  sequences of the form $[f(n)]$},  Mat. Sbornik N.S., \textbf{33(75)} (1953), 559--566.v

\bibitem{Ren-Sun-Zhang-Zhang-2025}X. M. Ren, Y. C. Sun, Q. Q. Zhang, R. Zhang, \textit{Roth--type theorem for nonlinear equations in Piatetski--Shapiro primes}, Int. J. Number Theory, \textbf{21} (2025), no. 4, 887--902.

\bibitem{Ren-Zhang-Zhang-2024}X. M. Ren, Q. Q. Zhang, R. Zhang, \textit{Roth--type theorem for quadratic system
              in Piatetski--Shapiro primes}, J. Number Theory, \textbf{257} (2024), 1--23.

\bibitem{Rivat-1992}J. Rivat, \textit{Autour d'un th\'{e}or\'{e}me de Pjatecki\u{\i}--\u{S}apiro: Nombres premiers
                 dans la suite $[n^c]$}, Th\'{e}se de Doctorat, Universit\'{e} Paris--Sud, 1992.

\bibitem{Rivat-Sargos-2001}J. Rivat, P. Sargos, \textit{Nombres premiers de la forme $\lfloor n^c \rfloor$},
                      Canad. J. Math., \textbf{53} (2001), no. 2, 414--433.

\bibitem{Rivat-Wu-2001}J. Rivat, J. Wu, \textit{Prime numbers of the form $[n^c]$},
                            Glasg. Math. J., \textbf{43} (2001), no. 2, 237--254.

\bibitem{Salmensuu-2020}J. Salmensuu, \textit{On the Waring--Goldbach problem with almost equal summands},
               Mathematika, \textbf{66} (2020), no. 2, 255--296.

\bibitem{Sun-Du-Pan-2025}Y. C. Sun, S. S. Du, H. Pan, Hao, \textit{Vinogradov's theorem with
                 Piatetski--Shapiro primes}, Int. Math. Res. Not. IMRN, 2025, no. 15, Paper No. rnaf125, 30 pp.

\bibitem{Vinogradov-1937}I. M. Vinogradov, \textit{Representation of an odd number as the sum of three primes},
                      Dokl. Akad. Nauk. SSSR, \textbf{15} (1937), 169--172.

\bibitem{Zhai-1998}W. G. Zhai, \textit{The Waring--Goldbach problem in thin sets of primes},
                    Acta Math. Sinica (Chinese Ser.), \textbf{41} (1998), no. 3, 595--608.

\bibitem{Zhang-Zhai-2005}D. Y. Zhang, W. G. Zhai, \textit{The Waring--Goldbach problem in thin sets of
             primes (II)}, Acta Math. Sinica (Chinese Ser.), \textbf{48} (2005), no. 4, 809--816.



\end{thebibliography}
\end{document}